\documentclass[12pt]{amsart}
\usepackage[margin=1in]{geometry}

\usepackage{amsthm,amsmath,amssymb,color,setspace}
\usepackage{mathtools,leftidx,tensor}
\usepackage[hidelinks]{hyperref}
\usepackage{graphicx}
\usepackage{xurl}

\usepackage[foot]{amsaddr}


\theoremstyle{definition}
\newtheorem{defi}{Definition}[section]
\newtheorem{ex}[defi]{Example}
\theoremstyle{plain}
\newtheorem{thm}[defi]{Theorem}

\newcommand{\lr}[1]{\lbrace #1 \rbrace}

\makeatletter
\def\imod#1{\allowbreak\mkern10mu({\operator@font mod}\,\,#1)} 
\def\@setcopyright{}                                           
\def\serieslogo@{}
\makeatother

\begin{document}
\singlespacing

\author[J.M.P.~Balmaceda]{Jose Maria P. ~Balmaceda}
\address[J.M.P.~Balmaceda]{Institute of Mathematics, University of the Philippines Diliman, 1101 Quezon City, Philippines}
\email{jpbalmaceda@up.edu.ph}

\author[D.V.A.~Briones]{Dom Vito A. ~Briones}
\address[D.V.A.~Briones, Corresponding author]{Institute of Mathematics, University of the Philippines Diliman, 1101 Quezon City, Philippines}
\email{dabriones@up.edu.ph}

\title{A survey on association schemes on triples}

\begin{abstract} 
Association schemes on triples (ASTs) are ternary analogues of classical association schemes, whose relations and adjacency algebras are ternary instead of binary. We provide a survey of the current progress in the study of ASTs, highlighting open questions, suggesting research directions, and producing some related results. We review properties of the ternary adjacency algebras of ASTs, ASTs whose relations are invariant under some group action, and ASTs obtained from 2-designs and two-graphs. We also provide a notion of fusion and fission ASTs, using the AST obtained from the affine special linear group $ASL(2,q)$ as an example.
\end{abstract}


\keywords{algebraic combinatorics, ternary algebra, association scheme on triples \\ \indent MSC Classification: 05E30}

\date{\today}

\maketitle
\singlespacing

\section{Introduction}
Classical association schemes originated from Bose and Shimamoto as partitions of a Cartesian product $\Omega\times\Omega$ with certain symmetry properties \cite{Bose1952}. These may be regarded as colorings of the edges of complete graphs satisfying desirable regularity conditions. A special case is the family of two-class association schemes which consists of colorings with two colors that yield the family of strongly regular graphs. The symmetry conditions that classical association schemes satisfy are sufficiently flexible as to accommodate various mathematical structures, yet adequately rigid as to endow classical association schemes with many desirable algebraic and combinatorial properties \cite{bannai_algebraic_1984}. For instance, the algebras generated by the adjacency matrices of classical association schemes are semisimple and, when commutative, satisfy duality properties that allow for the computation of possible parameter
values of various families of graphs.


In \cite{mesner_association_1990}, Mesner and Bhattacharya defined a ternary analogue for classical association schemes called association schemes on triples (ASTs). Instead of partitions of the Cartesian product $\Omega \times \Omega$, an AST is a partition of the Cartesian triple product $\Omega \times \Omega \times \Omega$ satisfying analogous regularity conditions. In particular, the adjacency hypermatrices of ASTs form a ternary algebra under an extension of the usual binary matrix product to a ternary cubic hypermatrix product.

In this survey, we share current progress in the study of ASTs, highlighting open questions, suggesting research directions, and producing some related results. In particular, we consider properties of the ternary adjacency algebras of ASTs, ASTs whose relations are invariant under some group action, such as symmetric ASTs, ASTs from two-transitive groups, and circulant ASTs, and the ASTs obtained from 2-designs and two-graphs. Lastly, we provide a notion of fusion and fission ASTs, using the AST obtained from the affine special linear group $ASL(2,q)$ as a primary example. The parameters of this AST are also obtained, thereby extending the work done in \cite{balmafamily}. The paper is structured as follows. We define ASTs in Section \ref{section_ast} and consider its algebraic structure as a ternary algebra in Section \ref{sect_algs}. Through Section \ref{section_grp}, we examine the relationships between group actions and ASTs. Proceeding, we explore the relationships between ASTs and 2-designs and two-graphs in Section \ref{section_combi}. Finally, we define fusion and fission ASTs in Section \ref{section_sub}, providing some examples and focusing particularly upon the AST obtained from $ASL(2,q)$.





\section{Assocation schemes on triples}\label{section_ast}
This section is based mostly on \cite{mesner_association_1990} and \cite{Zealand2021}. We define an association scheme on triples (AST) as a partition of a triple cartesian product satisfying certain symmetry conditions and then view ASTs as ternary algebras through their adjacency hypermatrices.

\begin{defi}
Let $\Omega$ be a finite set with at least 3 elements. An association scheme on triples (AST) on $\Omega$ is a partition $X=\lr{R_i}_{i=0}^m$ of $\Omega \times \Omega \times \Omega$ with $m\geq 4$ such that the following hold.

\begin{enumerate}
    \item  For each $i\in \lr{0,\ldots,m}$, there exists an integer $n_i^{(3)}$ such that for each pair of distinct $x,y\in \Omega$, the number of $z\in \Omega$ with $(x,y,z)\in R_i$ is $n_i^{(3)}$.
    \item (Principal Regularity Condition.) For any $i,j,k,l \in \lr{0,\ldots,m}$, there exists a constant $p_{ijk}^l$ such that for any $(x,y,z)\in R_l$, the number of $w$ such that $(w,y,z)\in R_i$, $(x,w,z)\in R_j$, and $(x,y,w)\in R_k$ is $p_{ijk}^l$.
    \item For any $i\in \lr{0,\ldots, m}$ and any $\sigma \in S_3$, there exists a $j\in \lr{0,\ldots,m}$ such that \[R_j= \lr{(x_{\sigma(1)},x_{\sigma(2)},x_{\sigma(3)}):(x_1,x_2,x_3)\in R_i}.\]
    \item The first four relations are $R_0=\lr{(x,x,x): x\in \Omega}$, $R_1=\lr{(x,y,y):x,y\in \Omega,x\neq y}$, $R_2=\lr{(y,x,y):x,y\in \Omega, x\neq y}$, and $R_3=\lr{(y,y,x):x,y\in \Omega, x\neq y}$.
\end{enumerate}
\end{defi}

The following example is the AST on $\Omega=\lr{1,2,3}$ with five relations.

\begin{ex}\label{example_ASTrel} Let $\Omega=\lr{1,2,3}$ and $X=\lr{R_i}_{i=0}^4$ be the partition of $\Omega \times \Omega \times \Omega$ given by the following ternary relations.
\begin{align*}
   { R_0}&=\lr{(1,1,1),(2,2,2),(3,3,3)},\\
    {R_1} &= \lr{(1,2,2),(1,3,3),(2,1,1),(2,3,3),(3,1,1),(3,2,2)},\\
    {R_2} &= \lr{(2,1,2),(3,1,3),(1,2,1),(3,2,3),(1,3,1),(2,3,2)},\\
    {R_3} &= \lr{(2,2,1),(3,3,1),(1,1,2),(3,3,2),(1,1,3),(2,2,3)},\\
    R_4 &= \lr{(1,2,3),(1,3,2),(2,1,3),(2,3,1),(3,1,2),(3,2,1)}.
        \end{align*}
Then $X$ is an AST on $\Omega$. Since $3$ is the only $w\in \Omega$ such that $(1,2,w)\in R_4$, we have $n_4^{(3)}=1$. Further, $(1,2,3)\in R_4$ and there is no $w$ such that $(w,2,3)$, $(1,w,3)$, and $(1,2,w)$ are all in $R_4$. Thus, we have $p_{444}^4=0$.
\end{ex}

The integer $n_i^{(3)}$ is the third valency of $R_i$, analogous to the valency of a classical association scheme. Conditions 1 and 3 imply for each $i$ the existence of constants $n_i^{(1)}=\vert \lr{z\in \Omega : (z,x,y)\in R_i} \vert $ and $n_i^{(2)}=\vert \lr{z\in \Omega : (x,z,y)\in R_i} \vert $ independent of any pair of distinct $x,y\in \Omega$. Accordingly, $n_i^{(1)}$ is called the first valency of $R_i$ and $n_i^{(2)}$ is called the second valency of $R_i$. The relations $R_0,R_1,R_2$ and $R_3$ are called the trivial relations and the other relations are the nontrivial relations. Further, the numbers $p_{ijk}^l$ are called the intersection numbers.

An AST can be viewed in terms of hypermatrices that generate a ternary algebra whose structure constants are the $p_{ijk}^l$, mirroring the situation between classical association schemes and their adjacency algebras. 

Let $X=\lr{R_i}_{i=0}^m$ be an AST on a set $\Omega$ of size $\nu$. We associate with each $R_i\in X$ the $\nu \times \nu \times \nu$ hypermatrix $A_i$ whose entries are indexed by $\Omega$. This $A_i$ is given by \[(A_i)_{xyz}=\begin{cases}
1, &\text{if } (x,y,z)\in R_i, \\ 0, &\text{otherwise}.
\end{cases}\] The $\mathbb{C}$-vector space generated by the adjacency hypermatrices forms a ternary algebra under the ternary operation $ABC\mapsto D$, where $D$ is the $\nu \times \nu \times \nu$ hypermatrix given by \[D_{xyz}=\sum_{w\in \Omega} A_{wyz} B_{xwz} C_{xyw}.\]

The structure constants of this ternary algebra are given by the intersection numbers $p_{ijk}^l$ of $X$ \cite{mesner_association_1990}; that is, $A_i A_j A_k = \sum_{l=0}^m p_{ijk}^l A_l$. For instance, if we consider the AST in Example \ref{example_ASTrel}, then we obtain $A_4 A_4 A_4 = \sum_{l\in \Omega} p_{444}^l A_l = 0$.





\section{Algebraic structure of ASTs}\label{sect_algs}

Currently, little is known about the structure of the ternary algebra obtained from the adjacency matrices of an AST. However, the situation is partially simplified by considering the subalgebra generated by the adjacency hypermatrices of the nontrivial relations. Indeed, Corollary 2.8 of \cite{mesner_association_1990} says that the adjacency matrices $\lr{A_i}_{i=4}^m$ of the nontrivial relations generate a subalgebra of the ternary algebra generated by all the adjacency matrices $\lr{A_i}_{i=0}^m$.
As other theorems and remarks in \cite{mesner_association_1990} provide values and restrictions for $p_{ijk}^l$ when $\lr{i,j,k,l} \cap \lr{0,1,2,3} \neq \varnothing$, \cite{mesner_association_1990} suggests that the most interesting intersection numbers are those that arise from the above subalgebra.

One open problem is to determine whether or not the desirable algebraic properties of classical association schemes hold for ASTs, such as the semisimplicity and duality properties of their adjacency algebras. In \cite{lister1971ternary}, Lister develops a structure theory of associative ternary algebras, with analogues of ideals, modules, identities, inverses, and decompositions. In particular, he provides ternary analogues for fields and semisimplicity. The simplest case where we may apply \cite{lister1971ternary} occurs when the given AST has only one nontrivial relation. \begin{ex}
Let $X=\lr{R_i}_{i=0}^4$ be an AST with only one nontrivial relation $R_4$. In this case, the ternary subalgebra generated by $A_4$ is both associative and commutative. In fact, with $A_4 A_4 A_4 = p_{444}^4 A_4$, we see that $\frac{1}{p_{444}}A_4$ and $A_4$ is a ternary identity of the subalgebra provided that $p_{444}^4\neq 0$. Moreover, with $c\neq 0$, $c A_4$ has an inverse $\frac{1}{c p_{444}^4} A_4 $. Therefore, when there is only one nontrivial relation $R_4$, the subalgebra generated by $A_4$ is a ternary field.  
\end{ex} 

Beyond this, we currently have no examples of ASTs whose subalgebras generated by the adjacency hypermatrices of the nontrivial relations are associative. As such, we have yet to acquire nontrivial examples on which to apply the structure theory of \cite{lister1971ternary}. A possible means to circumvent this may be taken from \cite{mesner_ternary_1994}, where a different notion of identities and inverses, called identity pairs and inverse pairs, are defined. A means of computation of such pairs are also given, with applications to ASTs obtained from 2-designs and some two-transitive groups. Although the authors did not provide a direct application of such pairs to ASTs, they found that computing for inverse pairs is equivalent to solving certain equations with the structure constants of the ASTs, and that there are plenty of inverse pairs from the subalgebras of ASTs. More recently, Gnang derived explicit necessary and sufficient conditions for the existence of inverse pairs and used these to formulate and prove a ternary analogue for the usual rank-nullity theorem \cite{GNANG2020391}.

If we instead focus on commutativity, there are nontrivial examples of commutative association schemes that will be given in the later sections. In particular, the ASTs from 2-designs, and the ASTs from the projective linear groups and the sporadic two-transitive groups are commutative. However, the structural properties of the subalgebras afforded by their commutativity remain unknown. In the classical association scheme case, the semisimplicity and duality properties may be obtained from applications of the spectral decomposition theorem on the mutually commuting adjacency matrices \cite{bannai_algebraic_1984}. Although they have yet to be applied to study the structure of the subalgebras of ASTs, there exist analogues of the fundamental theorem of linear algebra, the spectral theorem, and the Gram-Schmidt orthogonalization process for cubic hypermatrices \cite{GNANG2017238,AFST_2011_6_20_4_801_0}.

\section{ASTs from group actions} \label{section_grp}

In this section we consider ASTs whose relations are invariant under the action of some group.

\subsection{ASTs from two-transitive groups}
Analogous to Schurian association schemes, which are classical association schemes obtained from transitive group actions \cite{bannai_algebraic_1984}, an AST may arise from the action of a two-transitive group. Indeed, given a two-transitive group $G$ acting on a set $\Omega$, the orbits of the induced action on $\Omega \times \Omega \times \Omega$ is an AST \cite{mesner_association_1990}. Further, the orbits of a two-point stabilizer are in bijection with the nontrivial relations of the AST and the sizes of these orbits give the third valencies \cite{mesner_association_1990,balmafamily}. 

In \cite{mesner_association_1990}, the sizes of the ASTs from the affine group $AGL(1,n)$, the projective group $PSL(2,n)$, the Suzuki group $Sz(2^{2k+1})$, and the Higman-Sims group $HS$ were acquired. Some intersection numbers from the ASTs obtained from $AGL(1,n)$ and $PSL(2,n)$ were also obtained. These were extended in \cite{balmafamily}, where the sizes and third valencies of the ASTs obtained from $S_n$ and $A_n$, $PGU(3,q)$, $PSU(3,q)$, and $Sp(2k,2)$, $Sz(2^{2k+1})$ and $Ree(3^{2k+1})$, some subgroups of $A\Gamma L(k,n)$, some subgroups of $P\Gamma L(k,n)$, and the sporadic two-transitive groups, were obtained. The intersection numbers of the ASTs from these subgroups of $P\Gamma L(k,n)$ and $A \Gamma L(k,n)$, and the sporadic two-transitive groups were also determined. In particular, the ASTs obtained from the projective linear groups and the sporadic two-transitive groups were found to be commutative. 

As of yet, the intersection numbers of the ASTs from the projective unitary groups, the symplectic groups, the Suzuki groups, and the Ree groups remain undetermined. The sizes, third valencies, and intersection numbers of the two-transitive subgroups of $A\Gamma L(k,n)$ not of the form $AGL(k,n)\rtimes H$ (where $H\leq Gal(GF(n))$) also remain undetermined.

\subsection{Symmetric and Circulant ASTs}
A ternary relation $R\subseteq \Omega\times \Omega \times \Omega$ is called symmetric if it is invariant under coordinate permutation; that is, for any $\sigma\in S_3$, we have \[R=\lr{(x_{\sigma(1)},x_{\sigma(2)},x_{(\sigma(3))}):(x,y,z)\in R}.\] An AST is called symmetric if all its nontrivial relations are symmetric \cite{mesner_association_1990}. In \cite{mesner_association_1990}, some parameters of such ASTs are computed. Through these computations, the authors found that a weak associative law holds for symmetric ASTs: \[(A_i A_i A_j) A_i A_i = A_i (A_i A_j A_i) A_i  =A_i A_i (A_j A_i A_i).\] Some examples of symmetric ASTs are the ASTs with only one nontrivial relation, and ASTs obtained from 2-designs and two-graphs \cite{mesner_association_1990}, as will be discussed in the next section.

Motivated by symmetric ASTs, another family of ASTs called circulant ASTs was defined in \cite{Zealand2021}. The nontrivial relations of these ASTs are called circulant, being invariant under the action of a common transitive cyclic subgroup of $S_n$. It turns out that such ASTs correspond to partitions of a certain subset of $\Omega^{[2]}=\lr{(x,y):x\neq y}\subseteq\Omega \times \Omega$ called AST-regular partitions. In particular, enumerating the AST-regular partitions yields all circulant ASTs over $\Omega$. This suggests looking for AST-regular partitions from known families of circulant ASTs, particularly the ASTs obtained from $PSL(2,q)$ for $q$ even and $AGL(1,q)$ for $q$ prime. 

The authors of \cite{Zealand2021} also defined the notion of thinness for ternary relations, wherein a circulant ternary relation $R$ is said to be $ab$-thin provided that the mapping $\sigma_{ab}$ from $R$ to $\Omega\times \Omega$ given by $\sigma_{ab}(x_1,x_2,x_3)=(x_a,x_b)$ is injective with image $\Omega^{[2]}$. By reasoning with thin relations, they showed that any nontrivial relation of a circulant AST is a disjoint union of thin circulant ternary relations. In light of this, \cite{Zealand2021} suggests finding circulant ASTs where every nontrivial circulant relation is thin.

The authors of \cite{Zealand2021} also suggested finding further examples of circulant ASTs. We give a new example below, obtained through GAP 4.11.1, which is the AST obtained from the action of the permutation representation of $PSL(2,11)$ of degree 11. It is a commutative AST whose parameters are given in \cite{balmafamily}.

\begin{ex}
Let $G$ be the subgroup of $S_{11}$ produced by the following generators: \[(1,2,4,9,5,7,3,11,10,6,8),\;(1,2,5,8,9)(4,10,7,6,11), \text{ and } (1,2,5,8,10,11)(3,6,7)(4,9).\] Then $G$ is permutation isomorphic to the degree 11 representation of $PSL(2,11)$ as described in \cite{dixon1996permutation}. Further, the AST obtained from $G$ is circulant, as each of its nontrivial relations is invariant under the action of the subgroup of $S_{11}$ generated by the length 11 cycle $( 1, 2, 3,10, 6,11, 8, 4, 5, 9, 7)$. 
\end{ex}

\subsection{ASTs with relations invariant under some group}
Motivated by the classification of classical association schemes over small vertices and the computational simplifications that arise by considering ASTs whose relations are invariant under some group action, \cite{balmasmall} provided an algorithm for generating (up to isomorphism) all ASTs over a given number of vertices whose nontrivial relations are invariant under some predetermined group action. In particular, appropriate choices of group actions can enumerate the symmetric ASTs, circulant ASTs, and even all ASTs over a given number of vertices, provided sufficient computational resources are available.


The authors found that there is a unique AST over three vertices, a unique symmetric ASTs over four or five vertices, a unique AST over four vertices with two nontrivial relations, and a unique nontrivial circulant AST over five vertices. Since the current version of the algorithm is computationally expensive, it would benefit from improvements either in the algorithm itself or in its implementation.

\section{ASTs from 2-designs and two-graphs}\label{section_combi}
Although there are no constructions at this time of ASTs that serve as analogues of the Hamming scheme or the Johnson scheme from classical association schemes, there are relationships between ASTs and other combinatorial objects such as two-graphs and 2-designs \cite{mesner_association_1990}.
\subsection{2-designs} 
A 2-design $(\Omega,B)$ with parameters $b,v,k,\lambda$ is a family $B$ of $k$-subsets of a set $\Omega$ (called blocks) such that any $2$-subset of $\Omega$ lies in exactly $\lambda$ blocks, and where $|\Omega|=v$, and $|B|=b$. In \cite{mesner_association_1990}, it was found that any symmetric nontrivial relation $R_i$ of an AST yields a family of 2-designs. Indeed, the family $B$ of all 3-subsets $\lr{x,y,z}$ such that $(x,y,z)\in R_i$ is a 2-design. Moreover, a 2-design $B$ with $\lambda=1$ yields an AST, by letting the nontrivial relations be \[R_4 = \lr{(x,y,z):x,y,z\;\text{distinct, }\lr{x,y,z}\;\text{lies in some block of }B}\] and \[R_5=\lr{(x,y,z):x,y,z\;\text{distinct, }\lr{x,y,z}\;\text{does not lie in any block of }B}.\]
The authors then proved that this subalgebra generated by the adjacency matrices $A_4$ and $A_5$ is commutative but not associative. It is also not minimal, for $A_4$ generates a proper subalgebra not containing $A_5$. As a converse, an AST with two nontrivial relations that are both symmetric and whose parameters satisfy certain conditions can be recovered from a 2-design through the above construction \cite{mesner_association_1990}. 

The construction above is for designs with $\lambda=1$. However, it is generally not possible to proceed in a similar manner for larger values of $\lambda$, although the construction holds for certain choices of $\lambda$-designs \cite{mesner_association_1990}. It remains undetermined which $\lambda$ and $\lambda$-designs permit such a construction of an AST.

\subsection{Two-graphs}
A collection $\Delta$ of $3$-subsets of a set $\Omega$ such that every $4$-subset of $\Omega$ contains an even number of members of $\Delta$ is called a two-graph $(\Omega,\Delta)$. A two-graph is regular if each pair of elements of $\Omega$ is contained in the same number of triples of $\Delta$. In \cite{mesner_association_1990}, the authors showed that an AST with two nontrivial relations $R_4$ and $R_5$ that are both symmetric and whose parameters satisfy \[p_{445}^4=p_{454}^4=p_{544}^4=p_{555}^4=p_{444}^5=p_{455}^5=p_{545}^5=p_{554}^4=0\] is equivalent to a regular two-graph.

\section{Fusion and fission ASTs}\label{section_sub}
Motivated by classical fusion and fission association schemes \cite{BANNAI1993385}, we define fusion and fission ASTs, providing a relationship between fusion and fission ASTs and a number of examples. In particular, we give the AST obtained from $ASL(2,q)$ as an instance of a fission scheme of the AST obtained from the affine general linear group $AGL(2,q)$. Further, we obtain the parameters of the AST from $ASL(2,q)$, extending the results of \cite{balmafamily} to another infinite family of two-transitive groups. We begin with the definition of fusion and fission ASTs, ASTs that can be obtained from combining or splitting the relations of another AST.

\begin{defi}
Let $\Omega$ be a nonempty set and let $X=\lr{R_i}_{i=0}^m$ and $\Tilde{X}=\lr{\Tilde{R}_\alpha}_{\alpha=0}^n$ be ASTs on $\Omega$. If for each $ i\in \lr{0,\ldots, m}$ there exists an $ \alpha \in \lr{0,\ldots,n}$ such that $R_i \subseteq \Tilde{R}_\alpha$, then we say that $\Tilde{X}$ is a fusion AST of $X$ and $X$ is a fission AST of $\Tilde{X}$.
\end{defi}

In other words, the relations of a fusion AST $\Tilde{X}$ are unions of the relations of its fission AST $X$. The following theorem provides a relationship between the intersection numbers of a fission AST and a fusion AST. 

\begin{thm}\label{thm_fus}
Let $\Omega$ be a nonempty set, $X=\lr{R_i}_{i=0}^m$ be an AST on $\Omega$, and $\Tilde{X}=\lr{\Tilde{R}_\alpha}_{\alpha=0}^n$ be a fusion AST of $X$. For each $\alpha\in \lr{0,\ldots,n}$, let $\Lambda_\alpha = \lr{i \in \lr{0,\ldots,m}: R_i \subseteq \Tilde{R}_\alpha}$. If $\Tilde{p}_{\alpha\beta\gamma}^{\delta}$ is the intersection number corresponding to $\Tilde{R}_\alpha$, $\Tilde{R}_\beta$, $\Tilde{R}_\gamma$, and $\Tilde{R}_\delta$, then \[\Tilde{p}_{\alpha\beta\gamma}^{\delta}=\sum_{i \in \Lambda_\alpha}\sum_{j \in \Lambda_\beta}\sum_{k \in \Lambda_\gamma} p_{ijk}^l\] for any $l \in \Lambda_\delta$. Furthermore, the third valency of a nontrivial relation $\Tilde{R}_\epsilon$ of $\Tilde{X}$ is $\Tilde{n}_\epsilon^{(3)}=\sum_{i \in \Lambda_\epsilon} n_i^{(3)}$.
\end{thm}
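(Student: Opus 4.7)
The plan is a direct double count that exploits the disjoint decomposition $\tilde{R}_\alpha = \bigsqcup_{i \in \Lambda_\alpha} R_i$ implicit in the fusion--fission relationship. Fixing $\delta$ and any $l \in \Lambda_\delta$, I begin by choosing an arbitrary triple $(x,y,z) \in R_l$. Since $R_l \subseteq \tilde{R}_\delta$, this triple also lies in $\tilde{R}_\delta$, so the principal regularity condition applied to $\tilde{X}$ asserts that exactly $\tilde{p}_{\alpha\beta\gamma}^\delta$ elements $w \in \Omega$ simultaneously satisfy $(w,y,z) \in \tilde{R}_\alpha$, $(x,w,z) \in \tilde{R}_\beta$, and $(x,y,w) \in \tilde{R}_\gamma$.

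I would then split this count over the fission relations. For any $w$, the membership $(w,y,z) \in \tilde{R}_\alpha$ is equivalent to $(w,y,z) \in R_i$ for a \emph{unique} $i \in \Lambda_\alpha$, by the disjointness of the $R_i$, and analogously for $\tilde{R}_\beta$ and $\tilde{R}_\gamma$. Summing over all triples $(i,j,k) \in \Lambda_\alpha \times \Lambda_\beta \times \Lambda_\gamma$ therefore partitions the count and yields
\[\tilde{p}_{\alpha\beta\gamma}^\delta = \sum_{i \in \Lambda_\alpha} \sum_{j \in \Lambda_\beta} \sum_{k \in \Lambda_\gamma} \left|\lr{w \in \Omega : (w,y,z) \in R_i,\ (x,w,z) \in R_j,\ (x,y,w) \in R_k}\right|.\]
Each inner cardinality equals $p_{ijk}^l$ by the principal regularity condition for $X$ applied at $(x,y,z) \in R_l$, establishing the desired identity for this particular $l$. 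Since the left-hand side depends only on $\delta$, the formula holds automatically for every $l \in \Lambda_\delta$.

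The third-valency claim follows from the same disjointness principle applied in a single coordinate: for any pair of distinct $x,y \in \Omega$, the set of $z$ with $(x,y,z) \in \tilde{R}_\epsilon$ partitions according to the unique $R_i$ containing $(x,y,z)$, yielding $\tilde{n}_\epsilon^{(3)} = \sum_{i \in \Lambda_\epsilon} n_i^{(3)}$. I do not foresee a serious obstacle here, since both statements are essentially bookkeeping; the only point requiring care is that the triple sum genuinely does not depend on the representative $l \in \Lambda_\delta$, which is guaranteed up front by $\tilde{X}$ being an AST, and hence need not be verified by direct manipulation of the $p_{ijk}^l$.
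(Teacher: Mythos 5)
Your proposal is correct and follows essentially the same route as the paper's proof: fix a representative $(x,y,z)\in R_l\subseteq\Tilde{R}_\delta$, count the witnesses $w$ for the principal regularity condition of $\Tilde{X}$, and split that count over the constituent relations $R_i$, $R_j$, $R_k$ using the fact that the $R_i$ partition each $\Tilde{R}_\alpha$. Your explicit remarks on uniqueness (disjointness preventing over-counting) and on the independence of the representative $l$ are points the paper leaves implicit, but the argument is the same.
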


\begin{proof}
Fix any $(x,y,z)\in R_l \subseteq \Tilde{R}_\delta$. The intersection number $\Tilde{p}_{\alpha\beta\gamma}^{\delta}$ is equal to the number of $w$ such that $(w,y,z)\in \Tilde{R}_\alpha$, $(x,w,z)\in \Tilde{R}_\beta$, and $(x,y,w)\in \Tilde{R}_\gamma$. However, these conditions on $w$ are equivalent to $(w,y,z)\in R_i$ for some $i\in \Lambda_\alpha $, $(x,w,z)\in R_j$ for some $j\in \Lambda_\beta$, and $(x,y,w)\in R_k$ for some $k\in \Lambda_\gamma $. This yields \[\Tilde{p}_{\alpha\beta\gamma}^{\delta}=\sum_{i \in \Lambda_\alpha}\sum_{j \in \Lambda_\beta}\sum_{k \in \Lambda_\gamma} p_{ijk}^l.\] Finally, given a nontrivial relation $\Tilde{R}_\epsilon$ of $\Tilde{X}$ and a member $(x,y,z)$ of $\Tilde{R}_\epsilon$, the third valency $\Tilde{n}_\epsilon^{(3)}$ is equal to the number of $w$ such that $(x,y,w)\in \tilde{R}_\epsilon$. This condition on $w$ is equivalent to $(x,y,w)$ being in $R_i$ for some $i\in \Lambda_\epsilon$, yielding $\Tilde{n}_\epsilon^{(3)}=\sum_{i \in \Lambda_\epsilon} n_i^{(3)}$.
\end{proof}

Theorem \ref{thm_fus} can be expressed in terms of the adjacency hypermatrices of $X$ and $\Tilde{X}$. If for each $\alpha\in \lr{0,\ldots,n}$ we let $\Tilde{A}_\alpha$ denote the adjacency hypermatrix corresponding to $\Tilde{R}_\alpha \in \Tilde{X}$, then $\Tilde{A}_\alpha = \sum_{i\in \Lambda_\alpha} {A_i} $. Hence, we have the following equality for any $\alpha,\beta,\gamma \in \lr{0,\ldots,n}$. 
\[\Tilde{A}_\alpha \Tilde{A}_\beta \Tilde{A}_\gamma 
= \sum_{\delta=0}^n \Tilde{p}_{\alpha\beta\gamma}^\delta \Tilde{A}_\delta 
= \sum_{\delta=0}^n \sum_{l\in \Lambda_\delta} \Tilde{p}_{\alpha\beta\gamma}^\delta A_l
=\sum_{\delta=0}^n \sum_{l\in \Lambda_\delta}
\left( \sum_{i \in \Lambda_\alpha}\sum_{j \in \Lambda_\beta}\sum_{k \in \Lambda_\gamma} p_{ijk}^l \right)A_l
         .\]

We now give some examples of fusion and fission ASTs. First, the AST over a set $\Omega$ with only one nontrivial relation is a fusion scheme of any AST over $\Omega$.

\begin{ex}
Let $X=\lr{R_i}_{i=0}^m$ be any AST on a set $\Omega$. Let $\Tilde{X}=\lr{R_0,R_1,R_2,R_3,\cup_{i=4}^m R_i}$ be the given partition of $\Omega \times \Omega \times \Omega$. Then $\Tilde{X}$ is a fusion AST of $X$ equal to the only AST over $\Omega$ with only one nontrivial relation.
\end{ex}

\begin{ex} 
The following example from two-graphs is obtained from Remark 5.6 of \cite{mesner_association_1990}. If an AST $X=\lr{R_\alpha}_{\alpha=0}^n$ that has more than two nontrivial relations contains a subset $J\subseteq \lr{4,5,\ldots,m}$ of indices such that ${R}_i$ for each $i \in J$ is symmetric, define \[\Delta = \lr{\lr{a,b,c}:(a,b,c)\in R_i \text{ for some } i\in J}.\] If for any quadruple $i,j,k,l$ from $\Omega$ such that an odd number of these lie in $J$ we have ${p}_{ijk}^l=0$, then $\Delta$ is a regular two-graph. This two-graph then produces an AST $\Tilde{X}$ with two-nontrivial relations, each of which is a union of the nontrivial relations of $X$; that is, $\Tilde{X}$ is a fusion AST of $X$.  
\end{ex}

The next example shows that fission ASTs occur naturally from two-transitive subgroups of two-transitive groups.
\begin{ex}
Let $G$ be a two-transitive group acting on $\Omega$ and $H$ be a two-transitive subgroup of $G$. Let $\Tilde{X}$ be the AST obtained from the action of $G$ on $\Omega$ and $X$ be the AST obtained from the action of $H$ on $\Omega$. Then $X$ is a fission scheme of $\Tilde{X}$. Indeed, the relations of $X$ are the orbits of $H$ on $\Omega \times \Omega \times \Omega$ while the relations of $\Tilde{X}$ are the orbits of $G$ on $\Omega \times \Omega \times \Omega$. Since $H\leq G$, the orbits of $G$ on $\Omega \times \Omega \times \Omega$ are unions of orbits of $H$. 
Examples of such fission ASTs occuring from two-transitive subgroups of two-transitive groups, along with some of their parameters, are in \cite{balmafamily}.
\end{ex}

For the final illustration, we compute the parameters of the AST obtained from the affine special linear group $ASL(2,q)$ where $q$ is a prime power, extending the work done in \cite{balmafamily}. Since $ASL(2,q)$ is a two-transitive subgroup of $AGL(2,q)$, the following is also an example of a fission scheme and its parameters. The proofs are omitted, the methods being similar to the ones used in \cite{balmafamily}.

For ease of discussion, we fix the following notations. Let $q$ be a prime power and $X$ be the AST obtained from \[ASL(2,q)= \lr{(x,y)^T \mapsto A(x,y)^T: A\in SL(2,q)}, \] the group of affine transformations on the affine space $V=(GF(q))^2$ whose linear parts have determinant 1. For $a\in GF(q)$, let $\vec{a}=(a,0)^T\in V$. Additionally, for $(u,v,w)\in V\times V\times V$, let $[(u,v,w)]\in X$ denote the orbit of $(u,v,w)$ under $ASL(2,q)$. The size and third valencies of $X$ are given in the following theorem. 

\begin{thm}
Let $q$ be a prime power and $X$ be the AST obtained from the action of $ASL(2,q)$. The two-point stabilizer $ASL(2,q)_{\vec{0},\vec{1}}$ has $2q-3$ orbits on $V\setminus \lr{\vec{0},\vec{1}}$. There are $q-2$ orbits of the form $\lr{\vec{a}}$, where $a\neq0,1$, and $q-1$ orbits of the form $\lr{(c,\mathfrak{a})^T:c\in GF(q)}$, where $\mathfrak{a}\neq 0$. Thus, $X$ has $2q-3$ nontrivial relations. There are $q-2$ nontrivial relations of the form \[R^a = \lr{[(\vec{0},\vec{1},\vec{a})]}, \; a\neq0,1,\] each with third valency 1. The remaining $q-1$ nontrivial relations of $X$ are of the form  \[{}^\mathfrak{a}\!R=\lr{[(\vec{0},\vec{1},(0,\mathfrak{a})^T)]}, \; \mathfrak{a}\neq0,\] each with third valency $q$. 
\end{thm}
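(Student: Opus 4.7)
The plan is to directly compute the two-point stabilizer $ASL(2,q)_{\vec{0},\vec{1}}$, determine its orbits on $V\setminus\lr{\vec{0},\vec{1}}$, and then invoke the correspondence recalled at the start of Section~\ref{section_grp}: for an AST arising from a two-transitive group, the nontrivial relations biject with the orbits of a two-point stabilizer, with orbit sizes giving the third valencies.

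First I would pin down the stabilizer. A typical element of $ASL(2,q)$ has the form $v\mapsto Av+b$ with $A\in SL(2,q)$ and $b\in V$. Fixing $\vec{0}$ forces $b=\vec{0}$, and then the condition $A\vec{1}=\vec{1}$ forces the first column of $A$ to equal $(1,0)^T$. Combined with $\det A = 1$, this leaves exactly the shears $A=\begin{pmatrix}1 & \alpha\\ 0 & 1\end{pmatrix}$ with $\alpha\in GF(q)$, so $ASL(2,q)_{\vec{0},\vec{1}}\cong (GF(q),+)$ has order $q$.

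Next I would compute the orbit of an arbitrary $(c,d)^T \in V\setminus\lr{\vec{0},\vec{1}}$. The stabilizer sends $(c,d)^T$ to $(c+\alpha d,d)^T$, preserving the second coordinate while the first sweeps freely whenever $d\neq 0$. Splitting by cases: when $d=0$, the point is fixed, producing the singleton orbits $\lr{\vec{c}}$ for $c\in GF(q)\setminus\lr{0,1}$, giving $q-2$ in total; when $d=\mathfrak{a}\neq 0$, the orbit is $\lr{(c,\mathfrak{a})^T:c\in GF(q)}$ of size $q$, one per nonzero $\mathfrak{a}$ and so $q-1$ in total. The grand total $(q-2)+(q-1)=2q-3$ matches the claim. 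Translating through the orbit--relation bijection, the singleton orbits yield the relations $R^a$ of third valency $1$, while the size-$q$ orbits yield the relations ${}^\mathfrak{a}\!R$ of third valency $q$.

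The argument is essentially bookkeeping --- no deep computation is required --- so the only thing to watch is a clean case-split on whether the second coordinate vanishes and a careful exclusion of $\vec{0}$ and $\vec{1}$ from the singleton count.
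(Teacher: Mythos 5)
Your proof is correct and follows exactly the route the paper intends: the paper omits the proof, citing the method of computing two-point stabilizer orbits from its earlier reference, and your computation of $ASL(2,q)_{\vec{0},\vec{1}}$ as the group of shears $\begin{pmatrix}1 & \alpha\\ 0 & 1\end{pmatrix}$ together with the orbit--relation bijection is precisely that method. The case split on whether the second coordinate vanishes correctly yields the $q-2$ singleton orbits and $q-1$ orbits of size $q$, matching the stated valencies.
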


For notational convenience, let $A^a$ denote the adjacency hypermatrix corresponding to the relation $R^a$ whenever $a\neq0,1$. Similarly, let ${}^\mathfrak{a} \! A$ denote the adjacency hypermatrix corresponding to the relation ${}^\mathfrak{a} \!R$ whenever $\mathfrak{a} \neq 0$. The intersection numbers of the subalgebra generated by the adjacency hypermatrices of the nontrivial relations of $X$ are given implicitly in the next theorem. 

\begin{thm}
Let $q$ be a prime power and $X$ be the AST obtained from the action of $ASL(2,q)$. The following equations hold for any $a,b,c\neq0,1$ and $\mathfrak{a},\mathfrak{b},\mathfrak{c}\neq 0$. 
\begin{enumerate}
    \item $A^a A^b A^c, = \begin{cases}A^{bc} & \text{if }bc=a(1-c)+c\neq1, \\ 0, & \text{otherwise}. \end{cases}$ 
    \item $A^a A^b \,{}^\mathfrak{c}\!A= A^a\, {}^\mathfrak{c}\!A\, A^b={}^\mathfrak{c}\!A \, A^a  A^b=0  $.
    \item ${}^\mathfrak{a}\!A \, {}^\mathfrak{b}\! A \,A^c = \begin{cases}{}^{\frac{\mathfrak{b}}{c}}\!A & \text{if }\mathfrak{a}c+\mathfrak{b}c=\mathfrak{b}, \\ 0, & \text{otherwise}. \end{cases}$  
    \item ${}^\mathfrak{a}\!A \, A^c\, {}^\mathfrak{b}\! A  = \begin{cases}{}^{{\mathfrak{b}}{c}}\!A & \text{if }\mathfrak{b}c=\mathfrak{a}+\mathfrak{b}, \\ 0, & \text{otherwise}. \end{cases}$  
    \item $A^c\,{}^\mathfrak{a}\!A \, {}^\mathfrak{b}\! A  = \begin{cases}{}^{{\mathfrak{b}}{(1-c)}}\!A & \text{if }\mathfrak{a}=-\mathfrak{b}c, \\ 0, & \text{otherwise}. \end{cases}$
    \item ${}^\mathfrak{a}\!A \, {}^\mathfrak{b}\! A \, {}^\mathfrak{c}\!A = \begin{cases}q A^{-\frac{\mathfrak{b}}{\mathfrak{c}}} & \text{if }\mathfrak{a}+\mathfrak{b}+\mathfrak{c}=0, \\ {}^{\mathfrak{a}+\mathfrak{b}+\mathfrak{c}}\!A & \text{if }\mathfrak{a}+\mathfrak{b}+\mathfrak{c}\neq0. \end{cases}$
\end{enumerate}

\end{thm}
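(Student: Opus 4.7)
The plan is to compute each intersection number directly by counting, exploiting the Schurian structure of $X$. First I would record the $ASL(2,q)$-invariant
\[
d(x,y,z) := \det(y - x,\; z - x),
\]
which is well-defined on triples because elements of $ASL(2,q)$ act as $v \mapsto Av + b$ with $A \in SL(2,q)$ preserving determinants. Combining this invariant with the description of the two-point stabilizer $ASL(2,q)_{\vec{0},\vec{1}}$ from the preceding theorem, one verifies that a triple of distinct points $(x,y,z)$ lies in $R^t$ (with $t \neq 0,1$) precisely when $d(x,y,z) = 0$ and $z - x = t(y - x)$, and lies in ${}^{\mathfrak{a}}\!R$ (with $\mathfrak{a} \neq 0$) precisely when $d(x,y,z) = \mathfrak{a}$. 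These criteria reduce every membership question to a short determinant computation.

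With these criteria in hand, for each of the six products I would fix a canonical representative of a candidate output relation, taking $(x,y) = (\vec{0}, \vec{1})$ and $z = \vec{l}$ or $z = (0, \mathfrak{l})^T$ according to the output type. The intersection number is then the number of $w \in V$ satisfying the three orbit conditions $(w,y,z) \in R_1$, $(x,w,z) \in R_2$, $(x,y,w) \in R_3$. The last of these alone constrains $w$: it forces $w = \vec{c}$ uniquely if $R_3 = R^c$, or allows $w = (c', \mathfrak{c})^T$ with $c' \in GF(q)$ free (yielding $q$ candidates) if $R_3 = {}^{\mathfrak{c}}\!R$. Substituting each candidate $w$ into the remaining two conditions yields equations in $c'$ and the orbit parameters whose solutions pinpoint both the output relation and the intersection number.

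For example, Case 1 with $w = \vec{c}$ and representative $(\vec{0}, \vec{1}, \vec{l})$ reduces to $l = a(1-c) + c$ and $l = bc$, giving the stated dichotomy once $l \neq 1$ is checked; the column-type outputs are ruled out immediately from $d(\vec{c}, \vec{1}, (0,\mathfrak{l})^T) = (1-c)\mathfrak{l} \neq 0$. Cases 2--5 similarly reduce to at most one linear equation in $c'$ together with an algebraic identity among the parameters, with the output type (column or singleton) forced by whether the second coordinate of $z$ must be nonzero. The hard part will be Case 6, where all three input hypermatrices are column-type and both output types are possible: if the output is column-type, the two determinant conditions become linear equations in $c'$ whose consistency forces $\mathfrak{l} = \mathfrak{a} + \mathfrak{b} + \mathfrak{c}$ and pins down a unique $c'$; if singleton-type, the $c'$-dependence of both conditions cancels, forcing $\mathfrak{a} + \mathfrak{b} + \mathfrak{c} = 0$ and $l = -\mathfrak{b}/\mathfrak{c}$, so that all $q$ candidate values of $c'$ contribute and the coefficient becomes $q$. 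Tracking these second-coordinate determinants carefully across the two subcases is the main bookkeeping challenge.
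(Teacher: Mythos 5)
Your proposal is correct: the determinant invariant $d(x,y,z)=\det(y-x,\,z-x)$ does characterize the relations exactly as you claim (collinear triples with ratio $t$ for $R^t$, and $d=\mathfrak{a}$ for ${}^{\mathfrak{a}}\!R$), and counting the witnesses $w$ over a canonical representative $(\vec{0},\vec{1},z)$ reproduces every case of the theorem, including the $q$-fold degeneracy in Case~6 when $\mathfrak{a}+\mathfrak{b}+\mathfrak{c}=0$. The paper omits its proof, referring to the orbit-counting methods of its earlier work on two-transitive ASTs, and your argument is essentially that same method, with the $SL(2,q)$-invariant determinant serving as a clean bookkeeping device.
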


The last theorem gives the intersection numbers $p_{ijk}^l$ of the ASTs obtained from $ASL(2,q)$ whenever exactly one of $R_i$, $R_j$, and $R_k$ is trivial. Here $I_1$, $I_2$, and $I_3$ denote the respective adjacency hypermatrices of the trivial relations $R_1$, $R_2$, and $R_3$ of $X$.

\begin{thm}
Let $q$ be a prime power and $X$ be the AST obtained from the action of $ASL(2,q)$. The following equations hold for any $a,b\neq0,1$ and $\mathfrak{a},\mathfrak{b}\neq 0$. 
\begin{enumerate}
    \item $I_1 A^a A^b = \begin{cases} I_1, &\text{if }ab=1, \\ 0, &\text{otherwise.}\end{cases}$
    \item $A^a I_2 A^b = \begin{cases} I_2, &\text{if }ab=a+b, \\ 0, &\text{otherwise.}\end{cases}$
    \item $A^a A^b I_3 = \begin{cases} I_3, &\text{if }a+b=1,  \\ 0, &\text{otherwise.}\end{cases}$
    \item $I_1 A^a\, {}^{\mathfrak{a}}\!A = I_1 \, {}^{\mathfrak{a}}\!A\, A^a  = A^a I_2 \, {}^{\mathfrak{a}}\!A = \, {}^{\mathfrak{a}}\!A\, I_2 A^a =A^a \, {}^{\mathfrak{a}}\!A\, I_3 = \, {}^{\mathfrak{a}}\!A\, A^a I_3 =0$.
    \item $I_1 \, {}^{\mathfrak{a}}\!A\, {}^{\mathfrak{b}}\!A = \begin{cases}qI_1 & \text{if }\mathfrak{a}=\mathfrak{-b}, \\ {0}, & \text{otherwise}. \end{cases}$

    \item $ {}^{\mathfrak{a}}\!A\, I_2 \, {}^{\mathfrak{b}}\!A = \begin{cases}qI_2 & \text{if }\mathfrak{a}=\mathfrak{-b}, \\ {0}, & \text{otherwise}. \end{cases}$
    \item $ {}^{\mathfrak{a}}\!A\, {}^{\mathfrak{b}}\!A\,I_3  = \begin{cases}qI_3 & \text{if }\mathfrak{a}=\mathfrak{-b}, \\ {0}, & \text{otherwise}. \end{cases}$
\end{enumerate}

\end{thm}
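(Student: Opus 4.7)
The plan is to evaluate each product entry-wise via $(ABC)_{xyz}=\sum_{w}A_{wyz}B_{xwz}C_{xyw}$, reducing each of the seven equations to counting witnesses $w\in V=(GF(q))^2$ subject to three membership conditions. Step one is to record closed-form criteria for all relations involved: the trivial relations correspond to coordinate equalities plus a distinctness condition; for the singleton orbits, $(x,y,z)\in R^a$ iff $x\neq y$ and $z=(1-a)x+ay$; for the $q$-valent orbits, $(x,y,z)\in {}^\mathfrak{a}R$ iff $x\neq y$ and $\det(y-x\,|\,z-x)=\mathfrak{a}$. The only genuinely new input is the determinantal criterion, which I would derive by writing a generic element of $ASL(2,q)_{\vec{0},\vec{1}}$ as the translation by $x$ composed with a matrix whose first column is $y-x$ and whose second column ranges over $\{m_0+\lambda(y-x):\lambda\in GF(q)\}$ for any fixed $m_0$ with $\det(y-x\,|\,m_0)=1$; multilinearity of $\det$ then produces the value $\mathfrak{a}$ on the nose.

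With these criteria in hand, the seven equations split into three uniform batches. For items (1)--(3), only singleton relations appear; each $R^a$- or $R^b$-condition expresses $w$ as an explicit affine combination of the two coordinates fixed by $I_k$, and equating the two expressions yields a single scalar identity in $GF(q)$ --- respectively $(1/a-b)(y-x)=0$, $(a+b-ab)(x-y)=0$, and $(1-a-b)(w-x)=0$ --- whose solvability under the background non-vanishing hypothesis ($x\neq y$ in (1) and (2), $w\neq x$ in (3)) gives the arithmetic conditions $ab=1$, $ab=a+b$, and $a+b=1$. When these hold, the unique $w$ is easily seen to be distinct from the fixed coordinates, and it contributes $1$ on the corresponding trivial relation. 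For item (4), the $I_k$ identifies two of the three vertices and the $R^a$-condition pins $w$ onto the affine line through the remaining pair; the two columns inside the surviving determinant then become scalar multiples of a common direction, the determinant vanishes, and since $\mathfrak{a}\neq0$ the product is $0$. For items (5)--(7), multilinearity of $\det$ combined with the coordinate equality imposed by $I_k$ reduces both ${}^\mathfrak{a}R$- and ${}^\mathfrak{b}R$-conditions to constraints on the single quantity $\det(y-x\,|\,w-x)$, up to sign, forcing $\mathfrak{a}=-\mathfrak{b}$; when this holds the valid $w$ form the entire affine line of $q$ points cut out by that determinant equation, none of which coincides with the fixed coordinates, yielding the coefficient $q$.

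The main obstacle is not algebraic depth but combinatorial bookkeeping: there are many sub-cases, and each carries distinctness conditions on $w$ relative to several already-fixed coordinates that must be checked to justify the stated $0$, $1$, or $q$. I would control this by proving once, as a small shared lemma, that the affine line cut out by any $R^a$-, ${}^\mathfrak{a}R$-, or $I_k$-condition avoids the distinguished points of the relevant $R_l$, so that each of the seven verifications is a direct instantiation of a single template rather than seven independent calculations.
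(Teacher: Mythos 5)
Your proposal is correct, and it supplies an argument the paper itself omits (the text only remarks that ``the proofs are omitted, the methods being similar to the ones used in \cite{balmafamily}''); what you do --- derive closed-form membership criteria $z=(1-a)x+ay$ for $R^a$ and $\det(y-x\mid z-x)=\mathfrak{a}$ for ${}^\mathfrak{a}\!R$ from the two-point stabilizer, then evaluate each triple product entrywise as a count of witnesses $w$ --- is exactly the standard intersection-number computation those methods amount to. I spot-checked your criteria and the representative cases: in (1) the constraint collapses to $(1-ab)(y-x)=0$ (equivalent to your $(1/a-b)(y-x)=0$), in (4) the $R^a$-condition forces the two determinant columns to be parallel so the product vanishes, and in (5)--(7) the two determinant conditions are negatives of one another and the solution set is a full affine line of $q$ points avoiding the fixed coordinates; all of this matches the stated formulas, so the remaining work is only the routine bookkeeping you already identify.
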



\bibliographystyle{amsplain}
 \bibliography{ternassoc}





\end{document}